\newtheorem{theorem}{Theorem}
\newtheorem{lemma}[theorem]{Lemma}
\newtheorem{corollary}[theorem]{Corollary}
\theoremstyle{definition}
\theoremstyle{remark}
\newcommand \supp{ \mbox{supp}}
\newcommand \SI{ S_\infty}
\newcommand \Exh{ \mbox{Exh}}
\newcommand \Fin{ \mbox{Fin}}
\newcommand \res{\upharpoonright}
\newcommand \Sym{ \mbox{Sym}}
\newcommand \NN{\mathbbm{N}}
\begin{document}

\title{An example of a non non-archimedean Polish group with ample generics}
\author{Maciej Malicki}

\address{Department of Mathematics and Mathematical Economics, Warsaw School of Economics, al. Niepodleglosci 162, 02-554,Warsaw, Poland}
\email{mamalicki@gmail.com}
\date{March 04, 2015}
\keywords{ample generics, non-archimedean groups, P-ideals}
\subjclass[2010]{03E15, 54H11}
 
\begin{abstract}
For an analytic $P$-ideal $I$, $S_I$ is the Polish group of all permutations of $\NN$ whose support is in $I$, with Polish topology given by the corresponding submeasure on $I$. We show that if $\Fin \subsetneq I$, then $S_I$ has ample generics. This implies that there exists a non non-archimedean Polish group with ample generics.
\end{abstract}

\maketitle

\section{Introduction}

A Polish (i.e. separable and completely metrizable) topological group $G$ has \emph{ample generics} if the diagonal action of $G$ on $G^{n+1}$ by conjugation has a comeager orbit for every $n \in \NN$. This notion was introduced by W. Hodges, I. Hodkinson, D. Lascar and S. Shelah \cite{HoHo}, and in recent years it has drawn attention of many researchers (see \cite{KeRo} for more details.) An important motivation behind these investigations is that the existence of ample generics entails very interesting and strong consequences: a Polish group $G$ with ample generics has the automatic continuity property (i.e. every homomorphism from $G$ into a separable group is continuous), the small index property (i.e. every subgroup $H \leq G$ with $[G:H]<2^\omega$ is open), and uncountable cofinality for non-open subgroups (i.e. every countable exhaustive chain of non-open subgroups of $G$ is finite.) Moreover, by the general theory of Polish groups, the automatic continuity property implies that there exists a unique Polish group topology on $G$.

One of fundamental results in this area (see \cite{KeRo}) provides a complete characterization of Polish groups with ample generics that are subgroups of the group $S_\infty$ of all permutations of the natural numbers, i.e. \emph{non-archimedean} groups. As a matter of fact, all the known so far Polish groups with ample generics are of this form, and, as A. Kechris \cite{Ke} put it, it is `an important open problem (...) whether there exist Polish groups that fail to be non-archimedean but have ample generics'. In this note, we solve this problem by indicating a whole family of Polish groups with ample generics that are non non-archimedean.

\section{Results}
Recall that a \emph{lower semi-continuous submeasure} on $\NN$ is a function $\phi: \mathcal{P}(\NN) \rightarrow [0,\infty]$ satisfying

\begin{itemize}
\item $\phi(\emptyset) = 0$,
\item $A \subseteq B$ implies that $\phi(A) \leq \phi(B)$,
\item $\phi(A \cup B) \leq \phi(A) +\phi(B)$, and $\phi(\{n\}) < \infty$ for $n \in \NN$,
\item $\phi(\bigcup_m A_m) = \lim_m \phi(A_m)$ whenever $A_0 \subseteq A_1 \subseteq \ldots$.
\end{itemize}

Let $I$ be an analytic $P$-ideal on $\NN$ containing the ideal $\Fin$ consisting of finite sets. It is well known (see \cite{Sol1}) that there exists a lower semi-continuous submeasure $\phi$ on $\NN$ such that $I=\Exh(\phi)$, where
\[ \Exh(\phi) = \{A \subseteq \NN : \lim_m \phi(A \setminus [0,m]) = 0 \}. \]

As in \cite{Ts}, we associate with $I$ the group $S_I \leq \SI$ of permutations of $\NN$ defined by
\[ S_I=\{ g \in \SI: \supp(g) \in I \}. \]

Then (see \cite[Theorem 5.3]{Ts}) $S_I$ is Polishable, and its Polish group topology $\tau_I$ is given by the metric
\[ d(f,g)=\phi(\{f \neq g\}). \]

\begin{theorem}
\label{thMain}
Let $I$ be an analytic $P$-ideal such that $\Fin \subsetneq I$. Then $S_I$ has ample generics.
\end{theorem}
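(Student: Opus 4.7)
I would verify the defining property of ample generics directly: for each $n \geq 1$, exhibit a comeager orbit for the diagonal conjugation action of $S_I$ on $S_I^n$. The plan follows the Kechris--Rosendal template (identify a candidate set, show it is comeager, show it is a single orbit), but every step has to be adapted to the genuinely topological metric $d(f,g) = \phi(\{f \neq g\})$ rather than to a basis of clopen subgroups as in the non-archimedean case.

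\textbf{Step 1: generic tuples.} For each $n$, define $A_n \subseteq S_I^n$ to consist of those $(g_1, \ldots, g_n)$ for which the action of the free group $F_n$ on $\NN$ via $g_1, \ldots, g_n$ has every orbit finite, and such that every finite orbit-type that is realizable by elements of $S_I$ is realized by infinitely many $F_n$-orbits. I would show $A_n$ is a dense $G_\delta$ in $(S_I^n, \tau_I^n)$. The $G_\delta$ form unfolds as a countable intersection of $\tau_I$-open conditions. Density is the main content: given a tuple $\mathbf{f}$ and $\epsilon > 0$, use $\supp(f_i) \in I = \Exh(\phi)$ to choose $m$ with $\phi(\supp(f_i) \setminus [0,m]) < \epsilon/n$, and then modify $\mathbf{f}$ only outside $[0,m]$ by a $\phi$-small perturbation that destroys infinite $F_n$-orbits and introduces the missing finite orbit-types (using that $\Fin \subsetneq I$, so there is room for arbitrarily many new finite orbits inside an $I$-set of arbitrarily small $\phi$-mass).

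\textbf{Step 2: conjugating generic tuples.} Given $\mathbf{g}, \mathbf{g}' \in A_n$, I would construct $h \in S_I$ with $h g_i h^{-1} = g_i'$ for each $i$ by a back-and-forth enumeration of the $F_n$-orbits of $\mathbf{g}$ and $\mathbf{g}'$: at each stage, a finite partial conjugation is extended by matching an orbit on one side with a fresh orbit of the same type on the other (possible by infinite realization). The $P$-ideal hypothesis is essential here: since all of the sets $\supp(g_i), \supp(g_i')$ lie in $I$ and $I$ is a $P$-ideal, there is a single $J \in I$ almost containing every $\supp(g_i) \cup \supp(g_i')$, and the back-and-forth can be scheduled so that $\supp(h) \subseteq J$, placing $h \in S_I$.

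\textbf{Main obstacle.} The hardest part I expect is the coupled measure-theoretic and combinatorial bookkeeping. In Step 1, each finite perturbation must simultaneously realize prescribed $F_n$-orbit types and stay $\phi$-small; in Step 2, the back-and-forth must be run so that the cumulative support of $h$ remains exhausted by $\phi$. Both hinge on the defining feature $\phi(A \setminus [0,m]) \to 0$ for $A \in I$, which provides room to hide cofinitely many steps of either construction in a region of arbitrarily small $\phi$-mass. Synchronizing the discrete $F_n$-combinatorics of orbit matching with this submeasure bookkeeping---and in particular checking that the resulting candidate orbit really is $\tau_I$-dense, not merely dense in the $\SI$-topology---is where the proof genuinely diverges from the non-archimedean template and is the step I would treat with the most care.
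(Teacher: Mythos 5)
Your Step 2 contains a genuine error: the set $A_n$ is not a single $S_I$-orbit, so the proposed architecture (comeager set of "generic" tuples cut out by the orbit-type census, plus back-and-forth) cannot work. The obstruction is that any $h$ with $hg h^{-1}=g'$ must satisfy $h(\supp(g))=\supp(g')$ and hence $h(\NN\setminus\supp(g))=\NN\setminus\supp(g')$, and this can force $\supp(h)\notin I$ no matter how the back-and-forth is scheduled. Concretely, let $I=\{B\subseteq\NN: B\cap A_*\in\Fin\}$ with $A_*$ the odd numbers (an analytic, indeed $\boldsymbol{\Sigma}^0_2$, $P$-ideal with $\Fin\subsetneq I$, so it is covered by the theorem), take $n=1$, let $g$ be supported on the even numbers and $g'$ on the multiples of $4$, each a union of finite cycles realizing every cycle type infinitely often. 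Both lie in your $A_1$. But any conjugator $h$ maps the fixed points of $g$ (the odds) onto the fixed points of $g'$ (the odds together with the numbers $\equiv 2 \bmod 4$), so infinitely many odd numbers are moved by $h$; hence $\supp(h)\cap A_*$ is infinite and $h\notin S_I$. Relatedly, your appeal to the $P$-ideal property here is vacuous (there are only finitely many supports, so their union is in $I$ by finite additivity), and the claim that one can arrange $\supp(h)\subseteq J$ for $J$ essentially the union of the supports is false in general: when $\supp(g')\setminus\supp(g)$ is infinite and $\supp(g)\setminus\supp(g')$ is not, $h$ is forced to move infinitely many points lying outside all the supports. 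The comeager class must therefore encode how the supports sit relative to $\phi$, not just the orbit-type census; your $A_n$ is too large.

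The paper avoids this difficulty entirely by never attempting to describe the comeager orbit. It first proves a soft topological lemma: if for every identity neighborhood $V$ the set $V.x$ is somewhere dense in $\overline{G.x}$, then $G.x$ is comeager in $\overline{G.x}$. It then constructs one explicit tuple $(g_0,\dots,g_n)$, supported inside a single infinite $A\in I$, admitting invariant initial segments $[0,m_0]$ and a universality property: every finite tuple of permutations of a finite set $B_0$ is obtained by conjugating the $g_i$ via a bijection from a subset of $A$ located arbitrarily far out. Any finitely supported tuple close to $(g_0,\dots,g_n)$ is then approximated by conjugates $h_m g_i h_m^{-1}$ with $\supp(h_m)$ of small submeasure, which verifies the hypothesis of the lemma and also the density of the full orbit. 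If you want to salvage your approach you would need to isolate the correct, finer invariant of $S_I$-conjugacy and prove comeagerness of the corresponding class directly, which is substantially harder than the paper's local-to-global argument. (Your Step 1 is essentially fine modulo the standard fact that finitely supported permutations are dense in $S_I$, but it is not where the content of the theorem lies.)
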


The following fact is well known. We prove it for the sake of completeness.

\begin{lemma}
\label{le:fo}
Let $X$ be a Polish space, and let $G$ be a Polish group continuously acting on $X$. Suppose that $x \in X$ is such that for every open neighborhood of the identity $V \subseteq G$, the set $V.x$ is somewhere dense in $\overline{G.x}$. Then the orbit of  $x$ is comeager in $\overline{G.x}$.
\end{lemma}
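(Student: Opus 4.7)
\medskip

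\noindent\emph{Proof proposal.}  Set $Y := \overline{G.x}$; since $Y$ is $G$-invariant, $G$ acts continuously on the Polish space $Y$, and $G.x$ is dense in $Y$. Write $\pi : G \to Y$ for the continuous orbit map $g \mapsto gx$. My plan is to first show that $G.x$ is non-meager in $Y$ by a Baire category argument transferred to the group $G$ via $\pi$, and then upgrade non-meagerness to comeagerness using $G$-homogeneity.

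For the first step, I would argue by contradiction: suppose $G.x \subseteq \bigcup_n F_n$ with each $F_n$ closed nowhere dense in $Y$. Then the preimages $G_n := \pi^{-1}(F_n)$ are closed in $G$ and cover $G$. Since $G$ is Polish, hence Baire, some $G_n$ has nonempty interior, so there is a nonempty open $V' \subseteq G$ with $V'.x \subseteq F_n$. For any chosen $g_0 \in V'$, the set $V := g_0^{-1}V'$ is an open neighborhood of $e$, and $V.x = g_0^{-1}(V'.x) \subseteq g_0^{-1}F_n$. Because $g_0^{-1}$ acts as a homeomorphism of $Y$ and $F_n$ is closed nowhere dense, $g_0^{-1}F_n$ is closed nowhere dense as well, making $V.x$ nowhere dense in $Y$---contradicting the hypothesis.

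For the second step, $G.x$ is analytic (continuous image of the Polish space $G$), so it has the Baire property. Combined with non-meagerness, this yields a nonempty open $U \subseteq Y$ in which $G.x$ is comeager; by the Baire theorem, $G.x \cap U$ is nonempty, say it contains $gx$. Then $U' := g^{-1}U$ is an open neighborhood of $x$ in $Y$, and by transporting via the homeomorphism $g^{-1}$ of $Y$ (noting that $G.x$ is $G$-invariant), $G.x$ is comeager in $U'$, and hence in every translate $hU'$ for $h \in G$. The open $G$-invariant set $\Omega := \bigcup_{h \in G} hU'$ contains $G.x$, so is dense in $Y$. Using the Lindel\"of property of $\Omega$, I would extract a countable subcover $\Omega = \bigcup_n h_n U'$, so that $\Omega \setminus G.x = \bigcup_n (h_n U' \setminus G.x)$ is a countable union of sets meager in open subsets of $Y$, hence meager in $Y$; since $Y \setminus \Omega$ is nowhere dense, $Y \setminus G.x$ is meager, completing the proof.

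The main obstacle is the first step: translating the Baire-category information about $G.x$ inside $Y$ into information about preimages inside $G$, so that the hypothesis about $V.x$ being somewhere dense can be invoked for the specific $V = g_0^{-1}V'$ produced by Baire. The second step is a routine homogeneity-plus-Lindel\"of bootstrap that promotes local comeagerness to global comeagerness.
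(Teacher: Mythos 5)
Your argument is correct, and it reaches the conclusion by a genuinely different decomposition than the paper's. The paper runs a single contradiction: assuming $G.x$ is not comeager in $Y=\overline{G.x}$, it uses the Baire property of the analytic set $G.x$ to localize and find an open $U$ meeting the orbit with $G.x\cap U$ meager, pulls back to the open set $W=\{g: g.x\in U\}$, extracts a non-meager $W'$ with $W'.x\subseteq F_{n_0}$, and uses that $\overline{W'}$ has interior to produce a translate $V$ of the identity with $V.x$ nowhere dense. You instead split the work: first you show $G.x$ is non-meager in $Y$ by covering $G$ with the closed preimages $\pi^{-1}(F_n)$ and applying Baire directly in $G$ (this is in fact slightly cleaner than the paper's passage through a non-meager $W'$ and its closure, though both hinge on the same translation-to-the-identity trick and on $Y$ being $G$-invariant so that $g_0^{-1}F_n$ stays nowhere dense); then you invoke the standard topological zero--one law for orbits --- non-meager plus Baire property gives local comeagerness on some open set, which homogeneity and hereditary Lindel\"ofness promote to comeagerness in $Y$. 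Your version isolates the role of the hypothesis (it is used only to rule out meagerness) and makes the second half a reusable general fact about non-meager orbits, at the cost of an extra bootstrap step; the paper's version is more compact because the localization via the Baire property is done once, up front, inside $Y$. All the individual steps you cite (preimages of closed sets are closed, a Baire space is not a countable union of closed nowhere dense sets, meager-in-an-open-subspace implies meager, density of the invariant open set $\Omega$) are sound.
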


\begin{proof}
Suppose that $G.x$ is not comeager in $\overline{G.x}$. Since $G.x$ is analytic, and so it has the Baire property in $\overline{G.x}$, there exists an open set $U \subseteq X$ such that $G.x \cap U$ is non-empty and meager in $\overline{G.x}$. In other words, $U \cap G.x \subseteq \bigcup_n F_n$, where each $F_n$ is closed and nowhere dense in $\overline{G.x}$. The set
\[ W=\{ g \in G: g.x \in U \} \]
is open, non-empty in $G$, so there exists a non-meager $W' \subseteq W$ such that $W'.x \subseteq F_{n_0}$ for some $n_0$. By continuity of the action, $\overline{W'}.x \subseteq F_{n_0}$ as well. Since $\overline{W'}$ has non-empty interior, there exists $g \in G$ and an open neighborhood of the identity $V$ such that $gV.x \subseteq F_{n_0}$. Therefore $V.x \subseteq g^{-1}F_{n_0}$, and $V.x$ is nowhere dense in $\overline{G.x}$; a contradiction.
\end{proof}

\begin{lemma}
\label{le1}
Let $I$ be an analytic $P$-ideal, and let $f,g \in S_I$. Suppose that $A \in I$, and $h_m \in S_I$, $m \in \NN$, are such that $\supp (h_m) \subseteq A$, and 
\[ f \res [0,m] =h_m g h_m^{-1} \res [0,m] \]
for all $m$. Then $h_m g h_m^{-1} \rightarrow f$.
\end{lemma}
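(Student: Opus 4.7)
The plan is to show directly that $d(h_m g h_m^{-1}, f) = \phi(\{h_m g h_m^{-1} \neq f\}) \to 0$, by bounding the disagreement set inside a single element of $I = \Exh(\phi)$ and then using exhaustiveness.

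First, I would locate the support of the conjugate: since conjugation permutes supports, $\supp(h_m g h_m^{-1}) = h_m(\supp(g))$, and because any point moved by $h_m g h_m^{-1}$ is either moved by $h_m$ or is the $h_m$-image of a point in $\supp(g)$, we get
\[ \supp(h_m g h_m^{-1}) \subseteq \supp(h_m) \cup \supp(g) \subseteq A \cup \supp(g). \]
Consequently, the disagreement set
\[ D_m := \{n \in \NN : h_m g h_m^{-1}(n) \neq f(n)\} \]
is contained in $\supp(h_m g h_m^{-1}) \cup \supp(f) \subseteq B$, where $B := A \cup \supp(g) \cup \supp(f)$. The key point is that $B$ does \emph{not} depend on $m$, and $B \in I$ since $I$ is an ideal containing $A$, $\supp(g)$, $\supp(f)$.

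Next, the hypothesis $f \res [0,m] = h_m g h_m^{-1} \res [0,m]$ says exactly that $D_m \cap [0,m] = \emptyset$, so $D_m \subseteq B \setminus [0,m]$. By monotonicity of the submeasure,
\[ d(h_m g h_m^{-1}, f) = \phi(D_m) \leq \phi(B \setminus [0,m]). \]
Finally, since $B \in I = \Exh(\phi)$, by definition $\phi(B \setminus [0,m]) \to 0$ as $m \to \infty$, so $d(h_m g h_m^{-1}, f) \to 0$, which is the claimed convergence in $\tau_I$.

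There is no real obstacle here; the entire argument is a one-shot application of exhaustiveness once one observes the uniform containment of the supports in the fixed $I$-set $A \cup \supp(g) \cup \supp(f)$. The only thing to be careful about is the elementary verification that $\supp(h_m g h_m^{-1}) \subseteq \supp(h_m) \cup \supp(g)$, which is immediate from the definitions.
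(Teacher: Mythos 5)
Your proof is correct and follows essentially the same route as the paper's: both bound the disagreement set by the fixed $I$-set $B = A \cup \supp(f) \cup \supp(g)$ via the containment $\supp(h_m g h_m^{-1}) \subseteq \supp(h_m) \cup \supp(g)$, observe that the hypothesis forces the disagreement set into $B \setminus [0,m]$, and conclude by exhaustiveness of $\phi$ on $B$.
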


\begin{proof}
Let $\phi$ be a lower semi-continuous submeasure on $\NN$ such that $I=\Exh(\phi)$. Put $B= A  \cup \supp(f) \cup \supp(g)$. Clearly, $B \in I$, and 
\[ \supp(h_m g h_m^{-1}) \subseteq \supp(h_m) \cup \supp(g) \subseteq B. \]
Since $\phi(B \setminus [0,m]) \rightarrow 0$, and, by our assumption,
\[ \{ f \neq h_m g h_m^{-1} \} \subseteq B \setminus [0,m] \]
for every $m$, we get that $d(f,h_m g h_m^{-1}) \rightarrow 0$, i.e. $h_m g h_m^{-1} \rightarrow f$.
\end{proof}
 

\begin{proof}[Proof of Theorem \ref{thMain}]
Let $\phi$ be a lower semi-continuous submeasure on $\NN$ such that $I=\Exh(\phi)$. We can assume that $\phi(\{n\})>0$ for all $n \in \NN$. For $\epsilon>0$, let $V_\epsilon \subseteq S_I$ be the neighbourhood of the identity in $S_I$ of the form
\[ V_\epsilon=\{f \in S_I: \phi(\supp(f))<\epsilon \}. \]

%
%

Fix $n \in \NN$. We will show that there exist $g_0, \ldots, g_n \in S_I$ such that for every $\epsilon>0$ the set $V_\epsilon.(g_0, \ldots, g_n)$ is somewhere dense in $S^{n+1}_I$, and $S_I.(g_0, \ldots, g_n)$ is dense in $S^{n+1}_I$, where 
\[ g.(g_0, \ldots, g_n)=(gg_0g^{-1}, \ldots, gg_ng^{-1}), \]
for $g, g_0, \ldots, g_n \in S_I$. As $n$ and $\epsilon$ are arbitrary, by Lemma \ref{le:fo}, this will imply that $S_I$ has ample generics.

Fix an infinite $A \in I$. It is easy to find $g_0, \ldots, g_n \in S_I$ such that the following conditions are satisfied:

\begin{enumerate}[i)]
\item $\supp (g_i) \subseteq A$ for $i \leq n$,
\item for every $m$ there exists $m_0>m$ such that $[0,m_0]$ is invariant under the action of each $g_i$, 
\item for every $m \in \NN$, finite $B_0 \subseteq \NN$, and every $f_0, \ldots, f_n \in \Sym(B_0)$ there exists $A_0 \subseteq A$ with $\min A_0>m$, and a bijection $h:A_0 \rightarrow B_0$ such that
\[ hg_i h^{-1}(b)=f_i(b) \]
for all $b \in B_0$, and $i \leq n$.
\end{enumerate}

We show that $g_0, \ldots, g_n$ are as required. Fix $\epsilon>0$. Fix $m_0 \in \NN$, and $\epsilon'>0$ such that
\begin{enumerate}[a)]
\item $\phi(A \setminus [0, m_0])<\epsilon/2$,
\item $[0,m_0]$ is invariant under the action of each $g_i$,
\item $(n+1)\epsilon'<\epsilon/2$,
\item $d(f,g_i)<\epsilon'$ implies that $f \res [0,m_0] =g_i \res [0,m_0]$, $i \leq n$.
\end{enumerate}

Fix $f_0, \ldots,f_n \in S_I$ such that $d(f_i,g_i)<\epsilon'$, $i \leq n$, and each $f_i$ has finite support. Let $B=\bigcup_{i \leq n} \supp(f_i)$, and let $m_1=\max B$. For every $m>m_0,m_1$ we will construct $h_m \in S_I$ such that
\[ \supp(h_m) \subseteq  (A \cup B) \setminus [0,m_0], \mbox{ and } d(f_i,h_m g_i h_m^{-1}) \rightarrow 0 \]
for every $i \leq n$. Observe that the former, together with Points a) and c), implies that
\[ \phi(\supp(h_m))<\epsilon/2+(n+1)\epsilon'<\epsilon, \]
i.e. $h_m \in V_\epsilon$.

Fix $m>m_0, m_1$. Put $B_0=(A \cup B) \cap (m_0,m]$. By Points b) and d), $f_i \res B_0 \in \Sym(B_0)$ for each $i \leq n$. By Point iii), there exists $A_0 \subseteq A$ with $\min A_0>m$, and a bijection $h:A_0 \rightarrow B_0$ such that
\[ hg_i h^{-1} \res B_0= f_i \res B_0 \]
for each $i \leq n$. Clearly, we can extend $h$ to a permutation $h_m$ of $\NN$ such that $\supp(h_m) \subseteq A_0 \cup B_0 \subseteq (A \cup B) \setminus [0,m_0]$. But then, by our choice of $f_i$ and Point d), we get that
\[ hg_i h^{-1} \res [0,m]=f_i \res [0,m]. \]
By Lemma \ref{le1}, $h_m g_i h_m^{-1} \rightarrow f_i$ for each $i \leq n$.

Since permutations with finite support are dense in $S_I$, and the only other requirement we imposed on $f_i$ is that $d(f_i,g_i)<\epsilon'$, the above shows that $V_\epsilon.(g_0, \ldots, g_n)$ is somewhere dense. Observe that if consider $S_I$ instead of $V_\epsilon$ (and set $m_0=-1$), the same argument gives that the orbit $S_I.(g_0, \ldots, g_n)$ is dense in $S_I^{n+1}$.
\end{proof}  

Recall that an ideal $I$ is called a \emph{trivial modification of} $\Fin$ if there exists $A \subseteq I$ such that 
\[ I=\{ B \subseteq \NN: A \cap B \in \Fin \}. \]

\begin{corollary}
Suppose that $I$ is a $\boldsymbol{\Sigma}^0_2$ $P$-ideal which contains $\Fin$, and is not a trivial modification of $\Fin$. Then $S_I$ is a non non-archimedean Polish group with ample generics. In particular, there exists such a group, e.g. $S_{I_S}$, where $I_S$ is the summable ideal.
\end{corollary}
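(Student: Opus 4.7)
Ample generics is immediate from Theorem~\ref{thMain}, since the corollary's hypothesis includes $\Fin \subsetneq I$. The substance is to show $S_I$ is not non-archimedean, i.e., the identity has no neighborhood basis of open subgroups.

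I plan to exhibit a single $\epsilon > 0$ such that no open subgroup of $S_I$ is contained in $V_\epsilon$. Every open subgroup $H$ contains $V_\delta$, and hence $\langle V_\delta\rangle$, for some $\delta>0$, so it suffices to show: for every $\delta>0$ there is $g \in \langle V_\delta\rangle$ with $\phi(\supp(g)) \geq \epsilon$. I would realise $g$ as a product of pairwise-disjoint transpositions $(a_1,b_1)\cdots(a_N,b_N)$, each with $\phi(\{a_k,b_k\})<\delta$, chosen so that the $\phi$-mass of the joint support $\bigcup_{k \leq N}\{a_k,b_k\}$ is at least $\epsilon$. For the summable ideal $I_S$ this is essentially immediate: consecutive pairs $\{2k,2k+1\}$ with $k$ large have $\phi<\delta$, while divergence of $\sum 1/(n+1)$ makes the union's $\phi$-mass unbounded, which accounts for the ``in particular'' clause.

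The main obstacle is executing this construction in general, where the hypotheses ``$\boldsymbol{\Sigma}^0_2$'' and ``not a trivial modification of $\Fin$'' must be converted into concrete statements about $\phi$. Using the $\boldsymbol{\Sigma}^0_2$ hypothesis I would choose a representing submeasure $\phi$ with $\phi(\NN)<\infty$ and establish a uniform $\epsilon>0$, independent of $\delta$, such that the small-singleton set $N_\delta=\{n:\phi(\{n\})<\delta\}$ carries $\phi$-mass at least $\epsilon$. Were this to fail, then for arbitrarily small $\delta$ nearly all of $\phi(\NN)$ would concentrate on the large-singleton complement $A_\delta=\{n:\phi(\{n\})\geq \delta\}$, and combining this concentration with the exhaustivity $I=\Exh(\phi)$ would force $I=\{B:B\cap A_\delta\in\Fin\}$ for a suitable $\delta$, contradicting the hypothesis. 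Granted the uniform lower bound, I pair up elements of $N_{\delta/2}$ into disjoint doubletons of $\phi$-mass $<\delta$; by lower semi-continuity of $\phi$, finitely many such doubletons cover a set of $\phi$-mass at least $\epsilon/2$, and the corresponding product of transpositions lies in $\langle V_\delta\rangle$ while having support outside $V_{\epsilon/2}$, giving the contradiction.
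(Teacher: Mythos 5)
Your overall strategy is genuinely different from the paper's: the paper disposes of non non-archimedeanness in two lines by citing Solecki's result that a $\boldsymbol{\Sigma}^0_2$ $P$-ideal is zero-dimensional in $\tau_I$ if and only if it is a trivial modification of $\Fin$, together with Tsankov's theorem that $S_I$ is zero-dimensional iff $I$ is, whereas you try to show directly that the subgroup generated by $V_\delta$ always escapes a fixed $V_\epsilon$. That reduction is sound, and the transposition construction does work once one has a uniform $\epsilon>0$ with $\phi(N_\delta)\geq\epsilon$ for all $\delta$. Note, however, that $\phi(\NN)<\infty$ is free for any analytic $P$-ideal (replace $\phi$ by $\min(\phi,1)$), so that cannot be where the $\boldsymbol{\Sigma}^0_2$ hypothesis does its work.

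The genuine gap is in your justification of the uniform lower bound. If $\inf_\delta\phi(N_\delta)=0$, what exhaustivity actually gives is $I=\bigcap_{\delta>0}\{B: B\cap A_\delta\in\Fin\}=\{B:\lim_{n\in B}\phi(\{n\})=0\}$, a countable intersection of trivial modifications, not a single one; your claim that this forces $I=\{B:B\cap A_\delta\in\Fin\}$ for one suitable $\delta$ is false for general analytic $P$-ideals. For instance, write $\NN=\bigsqcup_k C_k$ with each $C_k$ infinite and set $\phi(B)=\sum_k 2^{-k}[B\cap C_k\neq\emptyset]$: then $\phi(N_\delta)\to 0$, yet $I=\Exh(\phi)$ is the Fubini product $\emptyset\times\Fin$, which is not a trivial modification of $\Fin$. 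What rescues the claim is that this ideal is $\boldsymbol{\Pi}^0_3$-complete: in the concentration scenario one must run a dichotomy, namely either some $A_{\delta_0}$ contains every $A_\delta$ up to a finite set (and then $I$ is a single trivial modification, contradicting the hypothesis), or one extracts infinitely many pairwise disjoint infinite sets $A_{\delta_{j+1}}\setminus A_{\delta_j}$ and finds a copy of $\emptyset\times\Fin$ inside $I$, contradicting $\boldsymbol{\Sigma}^0_2$. This dichotomy, and the descriptive-set-theoretic use of the $\boldsymbol{\Sigma}^0_2$ hypothesis in its second horn, are the missing content; as written, your sketch both mislocates where that hypothesis is needed and asserts the key implication without an argument.
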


\begin{proof}
By \cite{Sol2}, $I$ is zero-dimensional in the Polish topology $\tau_I$ if and only if $I$ is a trivial modification of $\Fin$. Moreover, by \cite[Theorem 5.3]{Ts}, $S_I$ is zero-dimensional if and only if $I$ is zero-dimensional, so $S_I$ is not zero-dimensional. But a non-archimedean group must be zero-dimensional, so $S_I$ is non non-archimedean. By Theorem \ref{thMain}, $S_I$ has ample generics.

Clearly, the summable ideal 
\[ I_S= \{ A \subseteq \NN: \sum_{n \in A} 1/n<\infty \} \]
is a $\boldsymbol{\Sigma}^0_2$ $P$-ideal which is not a trivial modification of $\Fin$.  
\end{proof}

\end{document}